\theoremstyle{plain}
\newtheorem{theorem}{Theorem}
\newtheorem{lemma}{Lemma}
\theoremstyle{definition}
\theoremstyle{remark}
\newtheorem{remark}{Remark}
\numberwithin{equation}{section}
\numberwithin{lemma}{section}
\numberwithin{theorem}{section}
\begin{document} \title[A family of elliptic curves]{Integral solutions of certain Diophantine equation in quadratic fields
}
\author{Richa Sharma}

\address{Richa Sharma @Kerala School of Mathematics, Kozhikode-673571, Kerala, India}
\email{richa@ksom.res.in}

\keywords{ Elliptic curves, Diophantine equation}
\subjclass[2010] {11D25, 11D41, 11G05}
%{Primary: 11G05, Secondary: 14G05}
\maketitle

\begin{abstract} 

\noindent 
Let $K= \mathbf{Q}(\sqrt{d})$ be a quadratic field and $\mathcal{O}_{K}$ be its ring of integers. We study the solvability of the Diophantine equation $r + s + t = rst = 2$ in $\mathcal{O}_{K}$. We prove that except for $d= -7, -1, 17$ and $101$
%$\mathbf{Q}(\sqrt{-7}), \mathbf{Q}(\sqrt{-1}),\mathbf{Q}(\sqrt{17})$ and $\mathbf{Q}(\sqrt{101})$ 
this system is not solvable in the ring of integers of other quadratic fields.
\end{abstract}

\section{\textbf{Introduction}}
In 1960, Cassels \cite{Cassels} proved that the system of equations 
\begin{equation} \label{a}
r + s + t = r s t = 1,
\end{equation}
is not solvable in rationals $r,s$ and $t$. %This problem was proposed by Mordell.
Later in 1982, Small \cite{Charles} studied the solutions of \eqref{a} in the rings $\mathbb{Z}/m\mathbb{Z}$ and in the finite fields $F_{q}$ where $q = p^{n}$ with $p$ a prime and $n \ge 1$. Further in 1987, Mollin et al. \cite{Mollin} 
%was %inspired by the study of the equation \eqref{a},
%which is known to have no solutions in $\mathbf{Q}$. They 
considered \eqref{a} in the ring of integers of $K=\mathbf{Q}(\sqrt{d})$ 
and  proved that solutions exist if and only if $d=-1,2$ or $5$, where $x,y$ and $z$ are units in $\mathcal{O}_K$.
%the ring of integers of $K=\mathbf{Q}(\sqrt{d})$. 
 Bremner \cite{Cubic, Quartic} in a series of two papers determined all cubic and quartic fields whose ring of integers contain a solution to \eqref{a}.
Later in 1999, Chakraborty et al. \cite{Kalyan} also studied \eqref{a} in the ring of integers of quadratic fields reproducing the findings of Mollin et al. \cite{Mollin} for the original system by adopting a different technique.

Extending the study further, we consider the equation
\begin{equation}  \label{1}
r + s + t = rst = 2.
\end{equation}
%instead of $r + s + t = rst =1$. 
The sum and product of numbers equals $1$ has natural interest where as sum and product equals other naturals is a curious question. The method adopted here may not be suitable to consider a general $n$ instead of $2$ as for each particular $n$ the system give rise to a particular elliptic curve which may have different `torsion' and `rank' respectively. The next case, i.e. when the sum and product equals to $3$ is discussed in the last section.

%The motivation to study this equation is when we take sum and multiplication equal to two , in that case equation is solvable or not in the ring of integers of number field.
%We can't generalize our method for general $n \in \mathbb{N}$ because for  each $n$, we get different elliptic curve. \\
%We consider the Diophantine equation  
%\begin{equation}  \label{1}
%r + s + t = rst = 2
%\end{equation}
To begin with we perform suitable change of variables and transform \eqref{1} to an  elliptic curve with the Weierstrass form
\begin{equation} \label{2}
 E_{297}: Y^2=X^3+135 X+297
\end{equation}
and then study $E_{297}$ 
%which is defined over $\mathbb{Q}$, 
in the ring of integers of $K = \mathbb{Q}(\sqrt{d})$.% where $S$ is a finite set of primes.
\begin{remark}
We transform \eqref{1} into an elliptic curve \eqref{2} to show that one of the $(r,s,t)$ has to belong to $\mathbb{Q}$ (shown in \S3). %As we are looking the solutions in the ring of integers so this implies $r \in \mathbb{Z}$.
\end{remark}

System \eqref{1} give rise to the quadratic equation
$$
x^{2}-(2-r)x+\frac{2}{r}=0,~r \neq 0,
$$
with discriminant 
\begin{equation} \label{r}
    \Delta = \frac{r(r^3-4r^2+4r-8)}{r}.
\end{equation}
At hindsight there are infinitely many choices for the quadratic fields contributed by each $r$ of the above form where the system could have solutions.
The main result of this article is that the only possibilities are $r = \pm 1, 2$ and $-8$. Thus \eqref{1} is solvable only in $K=\mathbf{Q}(\sqrt{d})$ with $d = -7, -1, 17$ and $101$.
%\mathbf{Q}(\sqrt{-1}),\mathbf{Q}(\sqrt{17})$ and $\mathbf{Q}(\sqrt{101})$
Also the solutions are explicitly given. Throughout this article we denote ‘the point at infinity' of an elliptic curve by ${\mathcal{O}}$.
%\section{Main result}
 Now we state the 
 %first 
 main result of the paper. 
\begin{theorem} \label{thm1}
 Let $ K = \mathbb{Q}(\sqrt{d})$ be a quadratic field 
and $\mathcal{O}_{K}$ denote its ring of integers.  
Then the system
$$
r + s + t = rst = 2
$$ 
has no solution in $\mathcal{O}_K$ except for $d  = -7, -1, 17$ and $ 101$.
\end{theorem}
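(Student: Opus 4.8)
The plan is to exploit the symmetry of \eqref{1} and reduce everything to a single rational unknown before doing any number theory. Setting $p=rs+st+tr$, the three numbers $r,s,t$ are precisely the roots of the resolvent cubic $u^{3}-2u^{2}+pu-2$. If $p\in\mathbb{Q}$, then this cubic has rational coefficients and all of its roots in the quadratic field $K$, which forces it to be reducible over $\mathbb{Q}$ and hence to possess a rational root; thus one of $r,s,t$ lies in $\mathbb{Q}$. The only gap is the case $p\notin\mathbb{Q}$, and closing it is exactly what the passage to $E_{297}$ accomplishes (the Remark, proved in \S3). Granting that one of $r,s,t$ is rational, the $S_{3}$-symmetry of \eqref{1} lets me assume it is $r$; being simultaneously rational and an algebraic integer, $r\in\mathbb{Q}\cap\mathcal{O}_{K}=\mathbb{Z}$.

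To carry out the rationality reduction I would set up the model explicitly. Regarding $r$ as a parameter, $s$ and $t$ are the roots of $x^{2}-(2-r)x+\tfrac{2}{r}$, with discriminant $\Delta=\Delta(r)$ as in \eqref{r}; clearing denominators in $w^{2}=\Delta(r)$ gives a genus-one curve that is $\mathbb{Q}$-isomorphic to $E_{297}$, the parameter $r$ corresponding to the point with $X=3-\tfrac{18}{r}$ and $Y^{2}=X^{3}+135X+297\in K$. I would then compute the Mordell--Weil group $E_{297}(\mathbb{Q})$: the point $(3,27)$ is $3$-torsion, since $2(3,27)=(3,-27)$, and I expect $E_{297}(\mathbb{Q})\cong\mathbb{Z}/3\mathbb{Z}$ of rank $0$, its rational points corresponding to the degenerate value $X=3$ (that is, $r=\infty$), which reflects the absence of rational solutions of \eqref{1}. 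A genuine solution over $K=\mathbb{Q}(\sqrt{d})$ yields a point $P\in E_{297}(K)$ with $P+\sigma P\in E_{297}(\mathbb{Q})$ for $\sigma$ generating $\mathrm{Gal}(K/\mathbb{Q})$, and the rank-zero structure, combined with a rank analysis of the relevant quadratic twists $E_{297}^{(d)}$, should force the $X$-coordinate, and hence $r$, to be rational. This descent is the main obstacle: the geometry is routine, but excluding solutions with all of $r,s,t$ irrational, uniformly in $d$, is the delicate point.

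The remaining step is elementary enumeration. With $r\in\mathbb{Z}$ fixed, $s+t=2-r$ is automatically an integer, while $st=\tfrac{2}{r}$ must be a rational algebraic integer, which forces $\tfrac{2}{r}\in\mathbb{Z}$, i.e.\ $r\mid 2$; this leaves only finitely many values of $r$. For each admissible $r$ the numbers $s,t$ are roots of a monic quadratic with integer coefficients, hence algebraic integers lying in $K=\mathbb{Q}(\sqrt{\Delta(r)})$, so that $s,t\in\mathcal{O}_{K}$ automatically. It then remains to read off the squarefree part $d$ of $\Delta(r)$ and to record the explicit triple $(r,s,t)$ in each case; carrying this out over the divisors of $2$ should reproduce the quadratic fields listed in Theorem~\ref{thm1}, and the same explicit triples establish the converse, namely that each of those fields genuinely admits a solution. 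I anticipate no difficulty here beyond bookkeeping, so that the entire weight of the theorem rests on the Mordell--Weil computation and the quadratic-point analysis of the preceding paragraph.
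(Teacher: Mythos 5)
Your proposal splits the same way the paper does (a rationality reduction via $E_{297}$, then an elementary enumeration), but both halves deserve comment, and the second one matters most.

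First, the genuine gap on your side: the rationality reduction is not proved in your proposal. The resolvent-cubic observation only disposes of the case $p=rs+st+tr\in\mathbb{Q}$, and for the complementary case you offer a sketch ($P+\sigma P\in E_{297}(\mathbb{Q})$, rank zero, twist ranks) and explicitly concede it is ``the delicate point.'' This is exactly the step the paper actually carries out, via Laska's trace-map formalism: since rank $E_{297}(\mathbb{Q})=0$ and $E_{297}(\mathbb{Q})_{tors}\cong\mathbb{Z}_3=\{\mathcal{O},\Omega,2\Omega\}$, the trace $P+\sigma P$ lands in $\{\mathcal{O},\Omega,2\Omega\}$; if it is $\mathcal{O}$ then $X(P)$ is conjugation-invariant, hence rational, and if it is $\Omega$ (resp. $2\Omega$) one passes to $P+\Omega$ (resp. $P+2\Omega$), whose trace is $3\Omega=\mathcal{O}$, and uses that translation by the rational $3$-torsion point only permutes the triple $(r,s,t)$. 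Note that no analysis of quadratic twists is needed here, contrary to what you anticipate: the twists enter the paper only in \S4 to compute rank $E_{297}(K)$, which plays no role in the proof of the theorem.

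Second, and more importantly: your enumeration step is \emph{correct}, it genuinely differs from the paper's, and if you carry it out honestly it refutes the theorem as printed rather than reproducing it. Your argument that $st=2/r$ lies in $\mathbb{Q}\cap\mathcal{O}_K=\mathbb{Z}$, hence $r\mid 2$, is airtight, and it yields $r\in\{1,-1,2,-2\}$ with $d=-7,\,17,\,-1,\,5$ respectively: for $r=-2$ one gets $s,t=2\pm\sqrt{5}$, roots of $x^2-4x-1$, so $(-2,\,2+\sqrt{5},\,2-\sqrt{5})$ is a solution in $\mathcal{O}_{\mathbb{Q}(\sqrt{5})}$, yet $d=5$ is missing from Theorem \ref{thm1}. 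Conversely, the paper's value $r=-8$ is killed by your own divisibility criterion: there $st=-1/4\notin\mathbb{Z}$, and indeed $\frac{10\pm\sqrt{101}}{2}\notin\mathbb{Z}\bigl[\frac{1+\sqrt{101}}{2}\bigr]$, since an element $\frac{u+v\sqrt{101}}{2}$ of that ring must have $u\equiv v\pmod 2$, which fails for $u=10$, $v=1$; so $d=101$ admits no solution in $\mathcal{O}_K$. The paper's Claim 2 goes wrong precisely here: in the even case it treats $d\equiv 1\pmod 4$ as sufficient for half-integers to be integral (ignoring the parity condition) and silently discards $r=-2$. So the correct list is $d=-7,-1,5,17$, not $d=-7,-1,17,101$. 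The flaw in your proposal is only the closing assumption that the bookkeeping ``should reproduce the quadratic fields listed in Theorem \ref{thm1}''; trust your computation over the target statement and finish it.
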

In \S 4 we discuss the rank of $E_{297}$ in $\mathbb{Q}$ and in quadratic fields of our interest.
\section{Preliminaries}
In this section we mention some results which are needed for the proof of the Theorem \ref{thm1}. First we state a basic result from algebraic number 
theory.\\
%A number field $K$ is a finite extension of the field of rational numbers $\mathbb{Q}$. A degree $2$ extension of $\mathbb{Q}$ is called quadratic field. Every quadratic field $K$ is of the form $\mathbb{Q}(\sqrt{d})$, where $d$ is a square-free integer. The field $K=\mathbb{Q}(\sqrt{d})$ is real (resp. imaginary) if $d$ is positive (resp. negative). A complex number $\alpha$ is called an algebraic integer if it is a root of a non-zero, monic polynomial over $\mathbb{Z}$. The ring of integers of $K$ is the set of all algebraic integers in $K$, and is traditionally denoted by $\mathcal{O}_K$. 
\begin{theorem}\label{rs1}
Let $K=\mathbb{Q}(\sqrt{d})$ with $d$ a square-free integer, then 
%$\mathcal{O}_K$ is given by 
$$
\mathcal{O}_K=\begin{cases}
\mathbb{Z}[\frac{1+\sqrt{d}}{2}] {\ \text{ if }\ d\equiv 1\pmod 4,}\\
\mathbb{Z}[\sqrt{d}]~~ {\ \text{ if }\ d\equiv 2, 3\pmod 4.}
\end{cases}
$$ 
\end{theorem}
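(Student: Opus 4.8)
The plan is to characterize membership in $\mathcal{O}_K$ through the trace and norm of an element, and then to reduce the whole problem to a single congruence modulo $4$ whose solvability depends on the residue class of $d$. First I would write an arbitrary element of $K$ as $\alpha = a + b\sqrt{d}$ with $a, b \in \mathbb{Q}$. If $\alpha \in \mathbb{Q}$ then integrality forces $\alpha \in \mathbb{Z}$ and there is nothing to prove, so assume $b \neq 0$; then the minimal polynomial of $\alpha$ over $\mathbb{Q}$ is $X^2 - 2aX + (a^2 - b^2 d)$. Since $\mathbb{Z}$ is integrally closed in $\mathbb{Q}$, the element $\alpha$ is an algebraic integer if and only if this minimal polynomial has coefficients in $\mathbb{Z}$, that is, if and only if $2a \in \mathbb{Z}$ and $a^2 - b^2 d \in \mathbb{Z}$.

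Next I would use these two conditions to control the denominators of $a$ and $b$. Setting $m = 2a \in \mathbb{Z}$ and multiplying the relation $a^2 - b^2 d \in \mathbb{Z}$ by $4$ gives $m^2 - (2b)^2 d \in 4\mathbb{Z}$, and hence $(2b)^2 d \in \mathbb{Z}$. Writing $2b = p/q$ in lowest terms, this forces $q^2 \mid d$; since $d$ is square-free this means $q = 1$, so that $n = 2b \in \mathbb{Z}$. Consequently every element of $\mathcal{O}_K$ has the form $\alpha = (m + n\sqrt{d})/2$ with $m, n \in \mathbb{Z}$ satisfying the single congruence $m^2 \equiv n^2 d \pmod 4$, and conversely every such $\alpha$ is integral.

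The heart of the argument is the analysis of this congruence, using that the only squares modulo $4$ are $0$ and $1$ (note $d \not\equiv 0 \pmod 4$ as $d$ is square-free). When $d \equiv 1 \pmod 4$ the condition reduces to $m \equiv n \pmod 2$, and I would identify the resulting set of elements $(m+n\sqrt{d})/2$ with $\mathbb{Z}[\frac{1+\sqrt{d}}{2}]$ via the substitution $m = 2u+v$, $n = v$ with $u, v \in \mathbb{Z}$. When $d \equiv 2$ or $3 \pmod 4$ a direct inspection of the residues shows the congruence can hold only when $m$ and $n$ are both even, which collapses the description to $\mathbb{Z} + \mathbb{Z}\sqrt{d} = \mathbb{Z}[\sqrt{d}]$. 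To close the loop I would verify the reverse inclusions directly: $\frac{1+\sqrt{d}}{2}$ is a root of $X^2 - X + \frac{1-d}{4}$, which lies in $\mathbb{Z}[X]$ exactly when $d \equiv 1 \pmod 4$, while $\sqrt{d}$ is always a root of $X^2 - d$. I expect the main obstacle to be organizing the modular case analysis cleanly and, in the case $d \equiv 1 \pmod 4$, matching the congruence-defined lattice with the ring $\mathbb{Z}[\frac{1+\sqrt{d}}{2}]$, rather than any individual computation.
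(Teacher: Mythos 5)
Your proof is correct, but there is nothing in the paper to compare it against: the paper states this theorem without proof, introducing it only as ``a basic result from algebraic number theory'' (it is the classical determination of the ring of integers of a quadratic field). Your argument is the standard textbook one, and every step is sound: integrality of $\alpha = a + b\sqrt{d}$ is equivalent to $2a \in \mathbb{Z}$ and $a^2 - b^2 d \in \mathbb{Z}$ via the minimal polynomial (using that $\mathbb{Z}$ is integrally closed in $\mathbb{Q}$); square-freeness of $d$ then forces $2b \in \mathbb{Z}$; the problem reduces to the congruence $m^2 \equiv n^2 d \pmod{4}$, whose case analysis gives exactly the two rings claimed; and the reverse inclusions follow from the minimal polynomials $X^2 - X + \frac{1-d}{4}$ and $X^2 - d$. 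The one point worth making fully explicit in a written-up version is the identification $\mathbb{Z}\bigl[\frac{1+\sqrt{d}}{2}\bigr] = \mathbb{Z} + \mathbb{Z}\frac{1+\sqrt{d}}{2}$, which needs the observation that $\omega = \frac{1+\sqrt{d}}{2}$ satisfies $\omega^2 = \omega + \frac{d-1}{4} \in \mathbb{Z} + \mathbb{Z}\omega$ when $d \equiv 1 \pmod 4$; your remark about its minimal polynomial already contains this fact.
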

%Now we recall the technique used by Laska in \cite{Laska} to 
We study the solutions of a family of elliptic curves defined over $\mathbb{Q}$ in the ring of integers of a quadratic field.% We closely follow this technique to prove Theorem \ref{thm1}. %This technique was also used in \cite{Kalyan}.

%mentioning some results which are needed to prove
%Theorem \ref{thm1}. First we state a basic result from algebraic number theory.
%We begin with the following results which is extracted from \cite{Laska}.
Let $K= \mathbb{Q}(\sqrt{d})$ and
%$d \in \mathbb{Z}$ square free, with ring of
%integers $\mathcal{O}_{K}$. 
$s'$ be the conjugate of an element $s\in K$ over $\mathbb{Q}$.
Further $R = \mathcal{O}_{K}[S^{-1}]$, where $S$ is some finite set of primes in $\mathcal{O}_{K}$. 
Thus $ \mathcal{O}_{K} \subset \mathcal{O}_{K}[S^{-1}] \subset K$. \\
%\noindent 
%Laska \cite{Laska} studied the solutions in the ring of $S$- integers, here we are studying the solutions in the ring of integers of the quadratic field.\\
Laska \cite{Laska} considered the equation (for $r \in \mathbb{Z}$ and $r \neq 0$)
$$
\Gamma_{r}:~ y^2 = x^3 -r,
$$
%Then $\Gamma_{r}$ 
which defines the Weierstrass form of an elliptic curve (call it $E_{r}$) over $\mathbb{Q}$. 
%Here $\Gamma_{r}$ is a Weierstrass equation for $E_{r}$.
For an elliptic curve $E$ over $\mathbb{Q}$ the ``trace map" 
$$
\sigma:E(K) \longrightarrow E(\mathbb{Q})
$$ 
is given by 
$$
\sigma(\mathcal{P}) = \mathcal{P} \oplus \mathcal{P}'.
%\sigma(\mathbb{Q})= \mathbb{Q} + \mathbb{Q'},
$$
Here $\mathcal{P}'$ is the conjugate of the element $\mathcal{P} \in E(K)$ arising from the conjugation in $K$ and $\oplus$ is the usual elliptic curve addition.
%where $\mathbb{Q} \rightarrow \mathbb{Q'}$ denotes the conjugation of $E$ arising from the conjugation in $K$. 
Laska considered the ``trace map'' for $\Gamma_r$ and calls it
%For the curves $E = E_{r}$, consider the trace map with regard to the particular Weierstrass equation $\Gamma_{r}$. We obtain a map
$$
\sigma_{r,R}: \Gamma_{r}(R) \rightarrow  \Gamma_{r}(\mathbb{Q}) \cup \{\mathcal{O}\}.
$$
%where $\mathcal{O}$ denotes the unique point at infinity of $\Gamma_{r}$.
If $r,R$ are fixed we simply write $\sigma$ instead of $\sigma_{r,R}$. The aim was to study $\sigma^{-1}(P)$ for a given $P \in \Gamma_{r}(\mathbb{Q}) \cup \{\mathcal{O}\}$. He  divided this inverse image set into two parts and called them `exceptional' and `non-exceptional' respectively. These two sets consist of:\\
%where $0$ denotes the unique point of infmity of $\Gamma_{r}$. If $r,R$ are fixed we simply write $\sigma$ instead of $\sigma_{r,R}$. In the following we study the inverse image $\sigma^{-1}(P)$ for a given $P \in \Gamma_{r}(\mathbb{Q}) \cup {0}$. 
%Thus $\Gamma_{r}(R)$ consists of two parts, exceptional and non-exceptional. We study these two parts
%separately:\\
$\bullet$~ an element $\mathcal{P} = (s,t) \in \Gamma_{r}(R)$ with $s \neq s'$ is called an exceptional solution of $\Gamma_{r}$ in $R$,\\
$\bullet$~non-exceptional solutions of $\Gamma_{r}$ in $R$ are contained in $\sigma^{-1}(P)$ for a given $P \in \Gamma_{r}(\mathbb{Q}) \cup \{\mathcal{O}\}$.
%$P$'s in $E_{297}(\mathbb{Q})$ including the point at infinity. 

If $\mathcal{P}=(s,t) \in \Gamma_{r}(R)$ is a non-exceptional solution
%$\Gamma_{r}$ in $R$, 
then $s=s'$ and from the Weierstrass equation that implies  $t= \pm t'$. Thus if $P = \mathcal{O}$, then
all candidates in $\sigma^{-1}(P)$ are non-exceptional. More precisely, one can show that, 
\begin{eqnarray}
&&
\sigma^{-1}(\mathcal{O})= \{(z^{-1}p, z^{-2}q\theta) : (p, q) \in \Gamma_{z^{3}r}(R \cap \mathbb{Q}),  \nonumber\\ 
&&\hspace*{20mm} p \in z(R \cap \mathbb{Q}), q \in z^2(R \cap \mathbb{Q})\}
\end{eqnarray}
%$$
%\sigma^{-1}(0)= \{(z^{-1}p, z^{-2}q\theta) : (p, q) \in \Gamma_{z^{3}r}(R \cap \mathbb{Q}), p \in z(R \cap \mathbb{Q}), q \in z^2(R \cap \mathbb{Q})\}
%$$
where for simplicity it is assumed that $\theta^{-1} \in R$.

If $P \neq \mathcal{O}$, then the non-exceptional solutions $\mathcal{P}$ contained in $\sigma^{-1}(P)$ are
exactly given by the conditions 
$$
\mathcal{P} \in \Gamma_{r}(R \cap \mathbb{Q}), \mathcal{P} \oplus \mathcal{P} = P.
$$
Thus the non-exceptional
solutions of $\Gamma_{r}$ in $R$ which are contained in $\sigma^{-1}(P)$ are obtained by solving the
equation $\Gamma_{z^{3}r}$ respectively $\Gamma_{r}$ in $R \cap \mathbb{Q}$.
Hence either $\mathcal{P} \in \Gamma_{r}(\mathbb{Q})$  or $\mathcal{P} \oplus \Bar{\mathcal{P}} = \mathcal{O}$.

Here we substitute $\Gamma_{r}$ by $E_{297}$ and $R$ by $\mathcal{O}_{K}$ to study the solutions of \eqref{1} in $\mathcal{O}_{K}$ by pulling back the elements of $E_{297}(\mathbb{Q})$ using the above mentioned technique.% of Laska \cite{Laska}.
\section{Proof of Theorem \ref{thm1}}
\begin{proof} Let us first transform the system \eqref{1} to the desired Weierstrass form $E_{297}$.
The system is
\begin{equation*}  \label{b}
r + s + t = rst = 2.
\end{equation*}
Putting the value of $t$ upon simplification it becomes
\begin{equation*} 
s + r + \frac{2}{rs} = 2.
\end{equation*}
Now substituting $r = - 2/x$ and $ s = -y/x $ in the last equation give
\begin{equation} \label{c}
    y^{2} + 2y + 2xy  = x^{3}.
\end{equation}
 Putting $x = x_{1}$ - $  1/2$ and $ y = \frac{y_{1}}{2} - x_{1} - \frac{1}{2}$ in \eqref{c} rids it from the $xy$ term 
 \begin{equation} \label{d}
 y_{1}^{2} = 4x_{1}^{3} - 2 x_{1}^{2} + 7 x_{1} + \frac{1}{2}.
\end{equation}
Further substituting  $ x_{1}= x_{2} + \frac{1}{6} $ and $y_{1}=y_{2}$ rids \eqref{d} the $x_{1}^{2}$ term
\begin{equation} \label{e}
y_{2}^{2}= 4x_{2}^{3} + \frac{20}{3} x_{2} + \frac{44}{27}.
\end{equation}
Now again putting $y_{2}= Y_{1}/27$ and $x_{2} = X_{1}/9$ give
\begin{equation} \label{f}
Y_{1}^{2} = 4X_{1}^{3} + 540 X_{1} + 1188.
\end{equation}
Finally substituting $Y_{1}=2Y$ and $X_{1}=X$ get us the required Weierstrass form
\begin{equation} \label{g}
E_{297}: Y^2=X^3+135 X+297.
\end{equation}
Here with the help of SAGE \cite{Ss45} we can conclude that 
$$
E_{297}(\mathbb{Q})_{tors} \cong \mathbb{Z}_{3} = \left\lbrace {\mathcal{O},(3,\pm{27}) }\right\rbrace
$$ 
and the $\mathbb{Q}$-rank of $E_{297}$ is zero (we give a mathematical proof of this fact in \S4).

Thus $\mathcal{O}$ and $(3,\pm{27})$ are the only $\mathbb{Q}$  rational points of \eqref{g}.
It is not difficult to see that the inverse transformation
$$
r = 18/(3-X)  ~~\mbox{and}~~  s = (Y - 3X - 18)/(3(3 - X))
$$
allows us to pass from \eqref{g} to \eqref{1}. Before proceeding to final leg of the proof of Theorem \ref{thm1}, we make a couple of claims and give their proofs.

\noindent {\textbf{Claim 1}}: One of the $(r,s,t)$ satisfying \eqref{1}
must belong to $\mathbb{Q}$.
\begin{proof}(Proof of claim 1)
Two cases needed to be considered.

Case I: $\mathcal{P}$ is non-exceptional. In this case either 
 $\mathcal{P} \in E_{297}(\mathbb{Q})$  or $\mathcal{P} \oplus \Bar{\mathcal{P}} = \mathcal{O}$.
If $\mathcal{P} \in E_{297}(\mathbb{Q})$ then this implies $r \in \mathbb{Q} $. 

Let $\mathcal{P} \oplus \Bar{\mathcal{P}} = \mathcal{O}$ and
$\mathcal{P} =(a + b \sqrt{d}, k+l \sqrt{d})$.
%$\mathcal{P} + \Bar{\mathcal{P}} = \mathcal{O}$, 
As $\mathcal{P}$ is non-exceptional, $b = 0$ and $k=0$. Thus $\mathcal{P}= (a, l \sqrt{d})$ and in this case too
$$
r = 18/(3-a) \in \mathbb{Q}.
$$
Case II:
Now if $\mathcal{P}$ is exceptional then $\mathcal{P} + \Bar{\mathcal{P}} = (3, \pm 27)$.
The curve \eqref{g} has exactly three elements over $\mathbb{Q}$ and the non-trivial elements are
of order $3$. Lets call them $\mathcal{O}, \Omega$ and $2 \Omega$. If $\mathcal{P} + \Bar{\mathcal{P}} = \Omega$, then clearly $\mathcal{P} + \Omega$ is
non-exceptional, since
$$
(\overline{\mathcal{P} + \Omega} )+ \mathcal{P} + \Omega = \overline{\mathcal{P}} + \mathcal{P} + 2\Omega = 3 \Omega = \mathcal{O}.
$$
Now if $\mathcal{P} + \overline{\mathcal{P}} = 2\Omega$, as before we can show that $\mathcal{P} + 2 \Omega$ is also non-exceptional.
As the claim is valid for non-exceptional elements, it is true
for $\mathcal{P} + \Omega$ and $\mathcal{P} + 2 \Omega$. Hence it is true for $\mathcal{P}$ itself.
\end{proof}
\noindent Hence without loss of generality we assume that $r \in  \mathbb{Q}$.

\noindent {\textbf{Claim 2}}
The only possibilities for $r$ satisfying the system 
$$
r + s + t = rst = 2
$$ 
in $\mathcal{O}_K$ are $\pm 1, 2$ and $-8$.
\begin{proof}(Proof of claim 2)
%The system  is 
%$$
%r + s + t = rst = 2.
%$$
As $r \in  \mathbb{Q}$ and we are looking for solutions in $\mathcal{O}_{K}$, this would imply that $r \in \mathbb{Z}$.
Three possibilities needed to be considered:
\begin{itemize}
    \item If $r= \pm 1$.
    In this case solutions exist.
    \item If $r$ is odd. In this case the denominator of \eqref{r} will be multiple of an odd number but in $\mathcal{O}_{K}$ denominator is only $2$ or $1$ by Theorem \ref{rs1}.
    So in this case there do not exist any solution in $\mathcal{O}_{K}$.
    \item If $r$ is even. In this case save for $r=2$ and $-8$, 
    the denominator of \eqref{r} will be multiple of $2$ and in some other cases denominator is $1$ but $d \equiv 1 \mod 4$. Thus again by Theorem \ref{rs1}, in this case too we don't (except for $r= 2, -8$) get any solution in $\mathcal{O}_{K}$.
\end{itemize}
Thus except these values of $r = \pm 1, 2$ and $-8$, this system of equation is not solvable in the ring of integers of other quadratic fields.

\end{proof}
\noindent We are now in a position to complete the proof of Theorem \ref{thm1}.
%As $r+s+t = rst = 2$ with
%$r, s, t \in  \mathcal{O}_{K}$ and $r \in \mathbb{Q}$, this would imply that $r = \pm 1, 2$ and $-8$.
We deal with all the four possibilities of $r$ separately. 

\noindent When $r = 1$, from \eqref{b} we have $s+t = 1$ and $st = 2$. Thus we get 
$$
(s, t) = (\frac{1-\sqrt{-7}}{2},\frac{1+\sqrt{-7}}{2})
 ~\mbox{and}~ (\frac{1+\sqrt{-7}}{2},\frac{1-\sqrt{-7}}{2}).
$$
When $r = -1$, we have $s + t = 3$ and $st = -2$. In this case
$$
(s, t) = (\frac{3-\sqrt{17}}{2},\frac{3+\sqrt{17}}{2}) 
~\mbox{and}~ (\frac{3+\sqrt{17}}{2},\frac{3-\sqrt{17}}{2}).
$$
Similarly when $r=2$ and $-8$, we get
$$
(s,t)=
(i,-i) (-i,i), (\frac{10+\sqrt{101}}{2}, \frac{10-\sqrt{101}}{2})
$$
$$
\mbox{and}~
(\frac{10-\sqrt{101}}{2}, \frac{10+\sqrt{101}}{2}).
$$
To conclude
$$
(1,\frac{1-\sqrt{-7}}{2},\frac{1+\sqrt{-7}}{2}), (1,\frac{1+\sqrt{-7}}{2},\frac{1-\sqrt{-7}}{2}),
$$
$$
(-1,\frac{3-\sqrt{17}}{2},\frac{3+\sqrt{17}}{2}), (-1, \frac{3+\sqrt{17}}{2},\frac{3-\sqrt{17}}{2})
$$ 
$$
(-8,\frac{10+\sqrt{101}}{2}, \frac{10-\sqrt{101}}{2})
(-8,\frac{10-\sqrt{101}}{2}, \frac{10+\sqrt{101}}{2})
$$
$$
(2,i,-i)~\mbox{and}~ (2,-i,i). 
$$
are the only solutions of \eqref{b} in $\mathcal{O}_K$.
\end{proof}
\section{Rank of $E_{297}$}
%(\mathbb{Q})$}
In this section we discuss the rank of $E_{297}$ over $\mathbb{Q}$ and over $\mathbb{Q}(\sqrt{d})$ for $d=-7, -1, 17$ and $101$.
\begin{lemma}The rank of $E_{297}(\mathbb{Q})$ is zero.
\end{lemma}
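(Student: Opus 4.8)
The plan is to run a descent that exploits the rational $3$-torsion we have already exhibited on $E_{297}$. The point $(3,27)$---equivalently the point $(0,0)$ on the isomorphic model $y^2+2xy+2y=x^3$ coming from \eqref{c}---generates a $\mathbb{Q}$-rational subgroup of order $3$, so there is a $3$-isogeny $\phi\colon E_{297}\to E'$ defined over $\mathbb{Q}$, together with its dual $\hat\phi\colon E'\to E_{297}$ satisfying $\hat\phi\circ\phi=[3]$. Descent by this pair of isogenies replaces the full Mordell--Weil computation by the analysis of two explicit descent homomorphisms with finite, computable target groups, which is much more tractable by hand than a direct $2$-descent (the cubic $X^3+135X+297$ has no rational root, so there is no rational $2$-torsion to simplify the latter).

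First I would write down $E'$ and the two descent maps explicitly. Starting from the model $y^2+a_1xy+a_3y=x^3$ with $(a_1,a_3)=(2,2)$ and $(0,0)$ of order $3$, V\'elu's formulas produce $E'$ and the maps $\phi,\hat\phi$. Since $\ker\phi$ is the constant group scheme $\mathbb{Z}/3\mathbb{Z}$, its Cartier dual $\ker\hat\phi$ is $\mu_3$, and the $\hat\phi$-descent map takes the shape
$$
\delta\colon E_{297}(\mathbb{Q})/\hat\phi\bigl(E'(\mathbb{Q})\bigr)\hookrightarrow \mathbb{Q}^{*}/(\mathbb{Q}^{*})^{3},
$$
sending a point to the class of a prescribed rational function evaluated there; the $\phi$-descent map is the companion homomorphism into $H^{1}(\mathbb{Q},\mathbb{Z}/3\mathbb{Z})$, realized after adjoining $\zeta_3$ as a map into the cube classes of $\mathbb{Q}(\sqrt{-3})$. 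Both images are unramified away from the primes of bad reduction, and from
$$
\Delta(E_{297})=-16\,(4\cdot135^{3}+27\cdot297^{2})=-2^{4}\cdot3^{12}\cdot23
$$
these primes are exactly $2$, $3$ and $23$. Because the degree $3$ is odd there is no archimedean obstruction, so each descent image already sits inside the finite group of cube classes supported on $\{2,3,23\}$, generated by the classes of $2$, $3$ and $23$.

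The core of the proof is then the explicit determination of the two Selmer groups. For each candidate cube class I would decide whether the associated homogeneous space is locally solvable at every prime in $\{2,3,23\}$; only the classes surviving all of these tests remain, and this computation pins down $\dim_{\mathbb{F}_3}\mathrm{Sel}^{(\phi)}$ and $\dim_{\mathbb{F}_3}\mathrm{Sel}^{(\hat\phi)}$. The standard isogeny rank bound,
$$
\mathrm{rank}\,E_{297}(\mathbb{Q})\le \dim_{\mathbb{F}_3}\mathrm{Sel}^{(\phi)}+\dim_{\mathbb{F}_3}\mathrm{Sel}^{(\hat\phi)}-1,
$$
in which the $-1$ absorbs the contribution of the known rational $3$-torsion, should then collapse the right-hand side to $0$.

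I expect the main obstacle to be precisely this local bookkeeping at the bad primes $2$, $3$ and $23$: computing the local images of the descent maps over $\mathbb{Q}_2$, $\mathbb{Q}_3$ and $\mathbb{Q}_{23}$, and then checking which global cube classes meet all of them simultaneously, is the delicate step, and one must also track the rational $3$-torsion carefully so that the torsion correction in the rank formula is applied correctly. As an independent cross-check, or as a fallback should the isogeny descent become unwieldy, one could instead perform a full $2$-descent over the cubic field $L=\mathbb{Q}[x]/(x^{3}+135x+297)$, where the same conclusion would follow once the class group and unit group of $L$ are determined.
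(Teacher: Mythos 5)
Your approach is genuinely different from the paper's: the paper gives an elementary argument, writing a hypothetical rational point of \eqref{g} as $(x_1/x_2,\,y_1/y_2)$ in lowest terms and deriving a contradiction from divisibility by a prime of the denominator, whereas you propose a descent via the rational $3$-isogeny attached to the subgroup $\{\mathcal{O},(3,\pm 27)\}$. Your framework is sound as far as it goes: the kernel of $\phi$ is generated by a rational point, so the bound $\mathrm{rank}\,E_{297}(\mathbb{Q})\le\dim_{\mathbb{F}_3}\mathrm{Sel}^{(\phi)}+\dim_{\mathbb{F}_3}\mathrm{Sel}^{(\hat\phi)}-1$ is the right one, the identification of the $\hat\phi$-descent target $H^1(\mathbb{Q},\mu_3)$ with $\mathbb{Q}^{*}/(\mathbb{Q}^{*})^{3}$ is correct, and your discriminant arithmetic for the model \eqref{g} is accurate.

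The genuine gap is that the decisive step is never performed: you describe how one \emph{would} compute the local images over $\mathbb{Q}_2$, $\mathbb{Q}_3$ and $\mathbb{Q}_{23}$ and then assert the rank bound ``should'' collapse to $0$. As written, no upper bound on either Selmer group is actually established, so nothing about the rank is yet proved. Moreover, the collapse is not automatic: the slack in your displayed inequality is precisely $\dim_{\mathbb{F}_3}\text{Sha}(E_{297}/\mathbb{Q})[\phi]+\dim_{\mathbb{F}_3}\text{Sha}(E'/\mathbb{Q})[\hat\phi]$, so the method succeeds only if the explicit local computations show that the two Selmer dimensions really sum to $1$; this must be verified, not presumed. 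Two smaller remarks. First, the discriminant you quote is that of a non-minimal model: the model $y^2+2xy+2y=x^3$ of \eqref{c} has discriminant $-2^4\cdot 23$, so $3$ is in fact a prime of good reduction (it must still be considered in a $3$-descent, but the local analysis there is that of a good prime, which lightens the bookkeeping). Second, completing your computation would genuinely strengthen the paper: the paper's own elementary argument ends by asserting that \eqref{g} ``has no solution in $\mathbb{Q}$,'' which cannot be literally correct since $(3,\pm 27)$ are rational points on the curve; a completed $3$-isogeny descent (or the $2$-descent over the cubic field $\mathbb{Q}[x]/(x^3+135x+297)$ that you propose as a fallback) would supply the rigorous argument this lemma needs.
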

%Now I would like to give the simple proof for rank.
\begin{proof}
\noindent %Now we show the rank of the elliptic curve $E_{297}(\mathbb{Q})$ defined by the equation \eqref{g} is zero. So the equation \eqref{g} doesn't have any rational solution. \\
If possible let  rank $E_{297}(\mathbb{Q}) \neq 0$. This would imply that
\eqref{g} have rational solutions. Let
 $(X,Y) = (\frac{x_{1}}{x_{2}}, \frac{y_{1}}{y_{2}})$ with $(x_{1},x_{2})=(y_{1},y_{2})=1$, be one such solution. Now putting the values of $X$ and $Y$ in \eqref{g}, we obtain
\begin{equation} \label{h}
y_{1}^{2}x_{2}^{3} = x_{1}^{3} y_{2}^{2} + 135 x_{1} y_{2}^{2} x_{2}^{2} + 297 y_{2}^{2}x_{2}^{3}.
\end{equation}
%Without the loss of generality, we can find a prime number 
Let $p$ be a prime such that $y_{2} = p^{\alpha} y_{22}$ where $\alpha \in \mathbb{Z}, \alpha \geq 1$ and $(p,y_{22}) =1 $. Putting the value of $y_{2}$ in \eqref{h}, gives that
right hand side of \eqref{h} is divisible by $p$. Therefore 
%$p$ will divide $ y_{1}x_{2}$ also, i.e.
$
p \mid  y_{1}x_{2}. 
$
Which implies either $p \mid y_{1}$ or $p \mid x_{2}$.
Suppose $p \mid y_{1}$ then since $y_{2} = p^{\alpha} y_{22}$, we get a contradiction to the fact that $ (y_{1}, y_{2})$ = $1$.
Thus $p \mid x_{2}$ and we write $x_{2}= p^{\beta} x_{22}$ where $ \beta \in \mathbb{Z}, \beta \geq 1$ and $(p,x_{22}) =1 $. 
Now putting this $y_{2}$ and $x_{2}$ in \eqref{h}, gives 
\begin{equation}  \label{I}
    p^{3\beta} x_{22}^{3} y_{1}^{2} = p^{2\alpha} y_{22}^{2} x_{1}^{3} + 135  x_{1} p^{2\alpha + 2\beta} y_{22}^{2} x_{22}^{2} + 297 p^{2\alpha + 3\beta}  y_{22}^{2}x_{22}^{3}.
\end{equation}
Three cases can occur.\\
Case I. Suppose $3\beta > 2\alpha$. In this case, 
\begin{equation} \label{J}
    p^{3\beta - 2\alpha} x_{22}^{3} y_{1}^{2} =  y_{22}^{2} x_{1}^{3} + 135  x_{1} p^{2\beta} y_{22}^{2} x_{22}^{2} + 297 p^{3\beta}  y_{22}^{2}x_{22}^{3}.
\end{equation}
Further from \eqref{J}
$$
p \mid y_{22} x_{1}
$$
and forces $p \mid x_{1}$ as $(p, y_{22})= 1$, which is a contradiction to the fact that $(x_{1}, x_{2}) = 1$. \\
Case II. Suppose $3\beta > 2\alpha$. In that case $p \mid y_{1}$, which is also contradiction to the fact that $ (y_{1}, y_{2})  = 1$.
\\
Case III. This case is analogous to Case I.

Hence \eqref{J} has no solution in $\mathbb{Z}$ and that in turn implies \eqref{g} has no solution in $\mathbb{Q}$. Thus the rank of $E_{297}(\mathbb{Q})$ defined by the equation \eqref{g} is zero. 
\end{proof}
%\section{Rank of $E_{297}(\mathbb{Q}(\sqrt{d}))$ for $d=-7, 17$}
\begin{lemma}
The rank of $E_{297}(\mathbb{Q}(\sqrt{d}))$ for $d=-7, 17$is $1$ and for $d=101$ it is $2$.
\end{lemma}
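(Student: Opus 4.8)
The plan is to reduce the rank computation over $K=\mathbb{Q}(\sqrt{d})$ to rank computations over $\mathbb{Q}$ by means of the standard quadratic–twist decomposition. Let $\tau$ be the nontrivial element of $\mathrm{Gal}(K/\mathbb{Q})$. It acts on $E_{297}(K)\otimes\mathbb{Q}$ and splits it into its $(+1)$– and $(-1)$–eigenspaces; the former is $E_{297}(\mathbb{Q})\otimes\mathbb{Q}$ and the latter is $E_{297}^{(d)}(\mathbb{Q})\otimes\mathbb{Q}$, where $E_{297}^{(d)}\colon Y^2=X^3+135\,d^2X+297\,d^3$ is the twist of $E_{297}$ by $d$. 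Consequently
\[
\mathrm{rank}\,E_{297}(K)=\mathrm{rank}\,E_{297}(\mathbb{Q})+\mathrm{rank}\,E_{297}^{(d)}(\mathbb{Q}).
\]
By the preceding lemma the first summand vanishes, so the entire problem collapses to determining the ranks of the three twisted curves $E_{297}^{(-7)},E_{297}^{(17)},E_{297}^{(101)}$ over $\mathbb{Q}$.

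For the lower bounds I would recycle the solutions already produced in the proof of Theorem \ref{thm1}. Each solution with $s,t\in K\setminus\mathbb{Q}$ pulls back, through the inverse transformation relating \eqref{g} and \eqref{1}, to a point $(X_0,Y_0\sqrt{d})\in E_{297}(K)$ with $X_0,Y_0\in\mathbb{Q}$; under the isomorphism $(X_0,Y_0\sqrt{d})\mapsto(d X_0,\,d^2Y_0)$ this becomes a genuine rational point of $E_{297}^{(d)}(\mathbb{Q})$. For example $r=1$, $d=-7$ yields the point $(105,1323)$ on $E_{297}^{(-7)}$. Since the only torsion of $E_{297}$ is the rational $3$–torsion $\{\mathcal{O},(3,\pm27)\}$, these pulled–back points have infinite order, giving $\mathrm{rank}\ge 1$ in every case. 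For $d=101$ a second independent point is needed; I would search for it directly on $E_{297}^{(101)}(\mathbb{Q})$ and certify independence by verifying that the regulator of the two points, computed from the canonical height pairing, is nonzero.

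The matching upper bounds are the heart of the argument. Here I would exploit the rational $3$–isogeny on $E_{297}$ furnished by the $3$–torsion point $(3,27)$: this isogeny $\phi\colon E_{297}\to E'$ is defined over $\mathbb{Q}$ and twists to an isogeny $\phi^{(d)}\colon E_{297}^{(d)}\to (E')^{(d)}$, still over $\mathbb{Q}$. Running descent via $\phi^{(d)}$ together with its dual bounds $\mathrm{rank}\,E_{297}^{(d)}(\mathbb{Q})$ in terms of the orders of the two isogeny–Selmer groups, which are assembled from local solvability conditions at the primes dividing the discriminants of the twists. The expectation is that these Selmer bounds meet the lower bounds exactly, forcing rank $1$ for $d=-7,17$ and rank $2$ for $d=101$.

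The main obstacle will be sharpness of the upper bound rather than the lower bound. Exhibiting points is routine once the twist reduction is in place, but making the descent sharp requires that the relevant part of the Tate--Shafarevich group contributes nothing to the $3$–descent; if the naive $\phi^{(d)}$–descent leaves a gap, I would close it either with the complementary descent along the dual isogeny or with a full $2$–descent over the cubic field defined by the irreducible polynomial $X^3+135X+297$. In the $d=101$ case there is the added difficulty that the second generator may have large canonical height, so locating it and confirming its independence from the first is where I expect the real work to lie.
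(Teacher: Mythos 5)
Your core reduction is exactly the paper's: both arguments rest on the quadratic-twist decomposition $\mathrm{rank}\,E_{297}(\mathbb{Q}(\sqrt{d}))=\mathrm{rank}\,E_{297}(\mathbb{Q})+\mathrm{rank}\,E_{297}^{(d)}(\mathbb{Q})$ (the paper quotes this from Silverman) combined with the previous lemma that $\mathrm{rank}\,E_{297}(\mathbb{Q})=0$. Where you diverge is in how the ranks of the three twists are established: the paper simply reports the SAGE output $\mathrm{rank}\,E_{297}^{(-7)}(\mathbb{Q})=\mathrm{rank}\,E_{297}^{(17)}(\mathbb{Q})=1$, $\mathrm{rank}\,E_{297}^{(101)}(\mathbb{Q})=2$, whereas you try to certify these by hand. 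Your lower-bound mechanism is sound and is a genuine improvement on a bare computer citation: solutions of the system in $\mathcal{O}_K$ do pull back to points in the minus eigenspace, and your sample point is correct --- $(105,\pm 1323)$ lies on $E_{297}^{(-7)}\colon y^2=x^3+6615x-101871$ and has infinite order. One quibble: to rule out torsion you must consider the torsion of the twist $E_{297}^{(d)}(\mathbb{Q})$ (equivalently, of $E_{297}$ over $K$), not $E_{297}(\mathbb{Q})_{tors}$; the rational $3$-torsion of $E_{297}$ says nothing about a point lying in the minus eigenspace. This is easily repaired: for $P=(105,1323)$ one computes $2P=(15,27)$ and then $x(3P)=2184/25\notin\mathbb{Z}$, so Nagell--Lutz forces $P$ to have infinite order.

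The genuine gap is in the upper bounds, and for $d=101$ also in the lower bound. You never exhibit the second independent point on $E_{297}^{(101)}(\mathbb{Q})$ --- the solutions of the original system (coming from $r=-8$) supply only one point up to sign and conjugation, and both the search for a second point and the regulator-based independence check are left as intentions. More seriously, the claimed equalities (rank exactly $1$, resp.\ $2$) rest entirely on the sentence ``the expectation is that these Selmer bounds meet the lower bounds exactly'': the $3$-isogeny descent that would furnish the upper bound is described but never executed, and you yourself note it could be obstructed by a nontrivial contribution from the Tate--Shafarevich group, requiring a further descent. As written, your argument proves only $\mathrm{rank}\geq 1$ for $d=-7,17$ and nothing beyond $\mathrm{rank}\geq 1$ for $d=101$. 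To be fair, the paper is no more rigorous at precisely this point --- it delegates the whole computation to SAGE --- but a proposal whose stated purpose is to replace the machine computation must actually carry out the descent (or else fall back on the same unconditional software citation); an expectation that a Selmer bound is sharp is not a proof.
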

\begin{proof}
 Let $E/K$ be an elliptic curve and $d \in  K^{*}$ be such that $L = K(\sqrt{d})$ is a quadratic
extension. Let $E_{d}/K$ be the twist of $E/K$, then by \cite{S92}, 
\begin{equation}\label{rs}
\text{rank}~E(L) = \text{rank}~E(K) + \text{rank}~ E_{d}(K).
\end{equation}
%Here in this paper $K=\mathbb{Q}$.
%\\
%We have shown that 
%If $ K = \mathbb{Q}(\sqrt{d})$ is a quadratic field with $d$ be a square-free
%integer, then except for $d $ =  $-7$ and $ 17$, the equation $r + s + t = rst = 2$ has
%no solution in the ring of integers of $K$.\\
%So we will look the rank of $ K = \mathbb{Q}(\sqrt{-7})$ and $ K = \mathbb{Q}(\sqrt{17})$.
We conclude using SAGE \cite{Ss45} that 
(already we have noted that rank $E_{297}(\mathbb{Q}) = 0$) rank $E_{-7\cdot 297}(\mathbb{Q}) = 1$, rank $E_{-1\cdot 297}(\mathbb{Q}) = 1$, rank $E_{17\cdot 297}(\mathbb{Q}) = 1$ and rank $E_{101\cdot 297}(\mathbb{Q}) = 2$. 
%As we know 
%$$
%\text{Rank}~(E(\mathbb{Q}(\sqrt{d})) = \text{Rank}~ (E(\mathbb{Q})) + \text{Rank}~ E_{d}(\mathbb{Q}) .
%$$
Now using \eqref{rs} we have,
$$
\text{rank}~(E(\mathbb{Q}(\sqrt{-7})) =
\text{rank}~(E(\mathbb{Q}(\sqrt{-1})) =
\text{rank}~(E(\mathbb{Q}(\sqrt{17})) = 1.
$$
$$ \mbox{and}~
\text{rank}~(E(\mathbb{Q}(\sqrt{101})) = 2
$$
\end{proof}
%It may be worth mentioning that the present author with K. Chakraborty \cite{Kalyan1} constructed a family of elliptic curves whose torsion part of the Mordell-Weil group over $\mathbb{Q}$ is trivial and the rank is at least $2$.\\
%In another work of Sharma et al.\cite{Richa} where they showed  the rank of family of elliptic curve is zero and torsion subgroup is isomorphic to $\mathbb{Z}_{2}$.
%Their exist a family of elliptic curves which has only integral solutions but the rank of the elliptic curve in this family is $0$. The family of elliptic curves was obtained in the work of Sharma etc.\cite{Richa}.
\section{Concluding remarks} 
We showed that the system
$$
r + s + t = rst = 2
$$ 
has no solution in $\mathcal{O}_K$ except for $d  = -7,-1,17$ and $ 101$ and the solutions are explicitly given.

It would of interest to consider the next case, i.e,
\begin{equation} \label{q}
    r + s + t = rst = 3.
\end{equation}
Suitable change of variables transform \eqref{q} to an  elliptic curve with Weierstrass form
$$
E_{13122}: y^2 = x^3 + 3645 x - 13122.
$$
Torsion of $E_{13122}$ over $\mathbb{Q}$ is isomorphic to $\mathbb{Z}_{3}$ and it's rank is zero (using SAGE \cite{Ss45}).

We can conclude that except for $d = -2, -1,7,10$ and $13$, the system \eqref{q} has no solutions in the ring of integers of $K$ and the solutions can be explicitly given (following analogous argument). Analogously other individual cases can be treated.

\section{Acknowledgement} %The author is grateful to Prof. K. Chakraborty for his constant support, comments and suggestions throughout this project.
A part of this manuscript was completed while the author was visiting Prof. Sanoli Gun  at the Institute of Mathematical Sciences (IMSc), Chennai. She is grateful to Prof. Sanoli for her support and encouragement. It is a pleasure to Prof. Michel Waldschmidt for his valuable comments and suggestions during the visit to IMSc. %The author would also like to thank Arkabrata Ghosh for fruitful discussion on this manuscript. 
Last but not the least, the serene ambience of Kerala School of Mathematics (KSoM) is a constant energy booster and is of great help to research.
%\section*{} My manuscript has no associate data.

\end{document}